\documentclass[11pt,leqno]{article}
\usepackage{graphicx, amsfonts, amsthm, amsxtra, amssymb, verbatim, makeidx}
\usepackage{subeqnarray, relsize}
\usepackage[mathscr]{euscript}
\usepackage{hyperref}
\textheight 24truecm
\textwidth 16truecm
\addtolength{\oddsidemargin}{-1.05truecm}
\addtolength{\topmargin}{-2truecm}
\makeindex
\makeglossary
\begin{document}
\newtheorem{theo}{Theorem}
\newtheorem{exam}{Example}
\newtheorem{coro}{Corollary}
\newtheorem{defi}{Definition}
\newtheorem{prob}{Problem}
\newtheorem{lemm}{Lemma}
\newtheorem{prop}{Proposition}
\newtheorem{rem}{Remark}
\newtheorem{conj}{Conjecture}
\newtheorem{calc}{}

\def\gru{\mu} 
\def\pg{{ \sf S}}               
\def\TS{{\mathlarger{\bf T}}}                
\def\NB{{\mathlarger{\bf N}}}
\def\group{{\sf G}}
\def\NLL{{\rm NL}}   

\def\plc{{ Z_\infty}}    
\def\pola{{u}}      
\newcommand\licy[1]{{\mathbb P}^{#1}} 
\newcommand\aoc[1]{Z^{#1}}     
\def\HL{{\rm Ho}}     
\def\NLL{{\rm NL}}   

\def\Z{\mathbb{Z}}                   
\def\Q{\mathbb{Q}}                   
\def\C{\mathbb{C}}                   
\def\N{\mathbb{N}}                   
\def\uhp{{\mathbb H}}                
\def\A{\mathbb{A}}                   
\def\dR{{\rm dR}}                    
\def\F{{\cal F}}                     
\def\Sp{{\rm Sp}}                    
\def\Gm{\mathbb{G}_m}                 
\def\Ga{\mathbb{G}_a}                 
\def\Tr{{\rm Tr}}                      
\def\tr{{{\mathsf t}{\mathsf r}}}                 
\def\spec{{\rm Spec}}            
\def\ker{{\rm ker}}              
\def\GL{{\rm GL}}                
\def\ker{{\rm ker}}              
\def\coker{{\rm coker}}          
\def\im{{\rm Im}}               
\def\coim{{\rm Coim}}            
\def\p{{\sf  p}}
\def\U{{\cal U}}   

\def\weig{{\nu}}
\def\r{{ r}}                       
\def\k{{\sf k}}                     
\def\ring{{\sf R}}                   
\def\X{{\sf X}}                      
\def\Ua{{   L}}                      
\def\T{{\sf T}}                      
\def\asone{{\sf A}}                  

\def\Ts{{\sf S}}
\def\cmv{{\sf M}}                    
\def\BG{{\sf G}}                       
\def\podu{{\sf pd}}                   
\def\ped{{\sf U}}                    
\def\per{{\bf  P}}                   
\def\gm{{  A}}                    
\def\gma{{\sf  B}}                   
\def\ben{{\sf b}}                    

\def\Rav{{\mathfrak M }}                     
\def\Ram{{\mathfrak C}}                     
\def\Rap{{\mathfrak G}}                     

\def\mov{{\sf  m}}                    
\def\Yuk{{\sf C}}                     
\def\Ra{{\sf R}}                      
\def\hn{{ h}}                         
\def\cpe{{\sf C}}                     
\def\g{{\sf g}}                       
\def\t{{\sf t}}                       
\def\pedo{{\sf  \Pi}}                  

\def\Der{{\rm Der}}                   
\def\MMF{{\sf MF}}                    
\def\codim{{\rm codim}}                
\def\dim{{\rm    dim}}                
\def\Lie{{\rm Lie}}                   
\def\gg{{\mathfrak g}}                

\def\u{{\sf u}}                       

\def\imh{{  \Psi}}                 
\def\imc{{  \Phi }}                  
\def\stab{{\rm Stab }}               
\def\Vec{{\rm Vec}}                 
\def\prim{{\rm  0}}                  

\def\Fg{{\sf F}}     
\def\hol{{\rm hol}}  
\def\non{{\rm non}}  
\def\alg{{\rm alg}}  
\def\tra{{\rm tra}}  

\def\bcov{{\rm \O_\T}}       

\def\leaves{{\cal L}}        

\def\cat{{\cal A}}              
\def\im{{\rm Im}}               

\def\pn{{\sf p}}              
\def\Pic{{\rm Pic}}           
\def\free{{\rm free}}         
\def \NS{{\rm NS}}    
\def\tor{{\rm tor}}
\def\codmod{{\xi}}    

\def\GM{{\rm GM}}

\def\perr{{\sf q}}        
\def\perdo{{\cal K}}   
\def\sfl{{\mathrm F}} 
\def\sp{{\mathbb S}}  

\newcommand\diff[1]{\frac{d #1}{dz}} 
\def\End{{\rm End}}              

\def\sing{{\rm Sing}}            
\def\cha{{\rm char}}             
\def\Gal{{\rm Gal}}              
\def\jacob{{\rm jacob}}          
\def\tjurina{{\rm tjurina}}      
\newcommand\Pn[1]{\mathbb{P}^{#1}}   
\def\P{\mathbb{P}}
\def\Ff{\mathbb{F}}                  

\def\O{{\cal O}}                     

\def\ring{{\mathsf R}}                         
\def\R{\mathbb{R}}                   

\newcommand\ep[1]{e^{\frac{2\pi i}{#1}}}
\newcommand\HH[2]{H^{#2}(#1)}        
\def\Mat{{\rm Mat}}              
\newcommand{\mat}[4]{
     \begin{pmatrix}
            #1 & #2 \\
            #3 & #4
       \end{pmatrix}
    }                                
\newcommand{\matt}[2]{
     \begin{pmatrix}                 
            #1   \\
            #2
       \end{pmatrix}
    }
\def\cl{{\rm cl}}                

\def\hc{{\mathsf H}}                 
\def\Hb{{\cal H}}                    
\def\pese{{\sf P}}                  

\def\PP{\tilde{\cal P}}              
\def\K{{\mathbb K}}                  

\def\M{{\cal M}}
\def\RR{{\cal R}}
\newcommand\Hi[1]{\mathbb{P}^{#1}_\infty}
\def\pt{\mathbb{C}[t]}               
\def\gr{{\rm Gr}}                
\def\Im{{\rm Im}}                
\def\Re{{\rm Re}}                
\def\depth{{\rm depth}}
\newcommand\SL[2]{{\rm SL}(#1, #2)}    
\newcommand\PSL[2]{{\rm PSL}(#1, #2)}  
\def\Resi{{\rm Resi}}              

\def\L{{\cal L}}                     
\def\Aut{{\rm Aut}}              
\def\any{R}                          
\newcommand\ovl[1]{\overline{#1}}    

\newcommand\mf[2]{{M}^{#1}_{#2}}     
\newcommand\mfn[2]{{\tilde M}^{#1}_{#2}}     

\newcommand\bn[2]{\binom{#1}{#2}}    
\def\ja{{\rm j}}                 
\def\Sc{\mathsf{S}}                  
\newcommand\es[1]{g_{#1}}            
\newcommand\V{{\mathsf V}}           
\newcommand\WW{{\mathsf W}}          
\newcommand\Ss{{\cal O}}             
\def\rank{{\rm rank}}                
\def\Dif{{\cal D}}                   
\def\gcd{{\rm gcd}}                  
\def\zedi{{\rm ZD}}                  
\def\BM{{\mathsf H}}                 
\def\plf{{\sf pl}}                             
\def\sgn{{\rm sgn}}                      
\def\diag{{\rm diag}}                   
\def\hodge{{\rm Hodge}}
\def\HF{{ F}}                                
\def\WF{{ W}}                               
\def\HV{{\sf HV}}                                
\def\pol{{\rm pole}}                               
\def\bafi{{\sf r}}
\def\id{{\rm id}}                               
\def\gms{{\sf M}}                           
\def\Iso{{\rm Iso}}                           

\def\hl{{\rm L}}    
\def\imF{{\rm F}}
\def\imG{{\rm G}}

\def\cf{r}   
\def\cm{\checkmark}
\def\MI{{\cal M}}
\def\se{{\sf s}}
\def\codnum{{\sf C}}

\begin{center}
{\LARGE\bf Computing the lines of a smooth cubic surface
}
\\
\vspace{.25in} {\large {\sc Hossein Movasati}}
\footnote{
Instituto de Matem\'atica Pura e Aplicada, IMPA, Estrada Dona Castorina, 110, 22460-320, Rio de Janeiro, RJ, Brazil,
{\tt \href{http://w3.impa.br/~hossein/}{www.impa.br/$\sim$hossein}, hossein@impa.br.}}
\end{center}
\begin{abstract}
We give an explicit formula for the $27$ lines of a smooth cubic surface near the Fermat surface. Our formula involves convergent power series  with coefficients in the extension of rational numbers with the sixth root of unity. Our main tool is the Artinian Gorenstein ring of socle two attached to such lines.     
\end{abstract}

\section{Introduction}
One of  the most well-known and beautiful objects in classical algebraic geometry is the  $27$ lines of a smooth cubic surface $X$. This has been partially  justified by plasters of cubic surfaces with their  lines made by mathematicians in the 19th century. For historical account on this and the visualization of cubic surface with their lines  see \cite{StratenLabs}. Algorithms to compute such lines are mainly based on brute force substitution of equations of lines in the equation of $X$. As the Hodge decomposition of the second cohomology of $X$ consists only of the middle piece, the study of lines of $X$ using Hodge theory might seem hopeless.
However, it turns out that the  Artinian Gorenstein ring attached to Hodge cycles originated from the works of P. Griffiths in 1970's and further elaborated in  \cite{voisin89, Otwinowska2003, Dan-2014, Roberto,  ho13-Roberto} can be useful in order to write down the equation of simple algebraic cycles like lines, using two dimensional periods of $X$. This idea has been explained in \cite{EmreHossein2018} which uses approximation of periods with a high precision in \cite{Sertoz2018}.   
Near to the Fermat variety the author in \cite{ho13} has  written down explicit formulas for the Taylor  series of such periods which leads us to the main result  Theorem \ref{main} of the present paper.     

We write down a cubic surface in the format 
\begin{equation}
\label{clarakaren2019}
 X_t:\ \ F_t:=x_0^3+x_1^3+x_2^3+x_3^3-\sum_{i\in I}t_ix^i=0,\ \ t:=(t_i,\ i\in I)\in\T:=\C^{20}\backslash\{\Delta=0\}, 
\end{equation}
where $I$ is the set of exponents of monomials of degree $3$ in four variables $x_0,x_1,x_2,x_3$ and $\Delta=0$ is the loci of singular cubic surfaces. 
We have written this as perturbation of the Fermat surface $X_0$, as our main computations is done in  a neighborhood of Fermat. 
For $\beta\in \N_0^{4}$ we denote by $\beta_i$ its $(i+1)$-th coordinate, that is, $\beta=(\beta_0,\beta_1,\beta_2,\beta_3)$, and for $n\in\Z$, $\bar n\in\N_0$ 
is defined by the rules $0\leq \bar n \leq 2,\ \  n\equiv_{3}\bar n$. 
For a positive rational number $r$,  $[r]$ is the integer part of $r$,  that is $[r]\leq r<[r]+1$, $\{r\}:=r-[r]$ and   
$\langle r\rangle=(r-1)(r-2)\cdots (r-[r])$ (and hence $\langle r\rangle=0$ if $r\in\N$).
We consider the set of $({\bf m},{\bf n},{\bf l},\zeta_1,\zeta_2)$, where $({\bf m},  {\bf n},{\bf l})=(1,2,3),(2,1,3),(3,1,2)$ and 
$\zeta_1,\zeta_2$ are roots of $-1$, that is,
 $\zeta_1^3=\zeta_2^3=-1$.
This set consists of $27$ elements. 
In this article we prove the following theorem: 
\begin{theo}
 \label{main}
 For the twenty seven choice of  $k=({\bf m},{\bf n},{\bf l}, \zeta_1,\zeta_2)$ as above we have the following rational curve inside $X_t$: 
 \begin{equation}
\label{CarolinePilar2022}
 \P^1_{k,t}:  
\left\{
 \begin{array}{l}
c_{0212}\cdot x_0-c_{0202}\cdot x_1+c_{0201}\cdot x_2+0\cdot x_3=0\\
c_{0223}\cdot x_0+0\cdot x_1
-c_{0203}\cdot x_2+c_{0202}\cdot x_3=0
 \end{array}
 \right.,
 \end{equation} 
 where
 $$
c_{i_1i_2j_1j_2}=\det\begin{bmatrix}
                         p_{i_1j_1}, & p_{i_1j_2}\\
                         p_{i_2j_1}, & p_{i_2j_2}
                        \end{bmatrix},
$$ 
\begin{eqnarray}
 \label{15.12.16-2022}
 p_{ij}&= & 
\mathlarger{\mathlarger{\mathlarger{\sum}}}_{a: I\to \N_0}
\frac{1}{ a! } 
\zeta_1^{  
\overline{(\beta_{ij}+a^*)_0+1}   }\cdot \zeta_2^{\overline{(\beta_{ij}+a^*)_{\bf n}+1}}
\mathlarger{\prod}_{i=0}^{3} \left\langle\frac{(\beta_{ij}+a^*)_i+1}{3}\right\rangle \cdot  t^a,
\end{eqnarray}
the sum runs through all $\#I$-tuples $a=(a_\alpha,\ \ \alpha\in I)$
of non-negative integers such that  
\begin{equation}
\label{TheLastMistake2017}
\left\{\frac{ (\beta_{ij}+a^*)_{0}+1}{3} \right\}+   \left\{\frac{ (\beta_{ij}+a^*)_{\bf m}+1}{3} \right\}=1,\ \ \ 
\end{equation}
$$
\left\{\frac{ (\beta_{ij}+a^*)_{\bf n}+1}{3} \right\}+   \left\{\frac{ (\beta_{ij}+a^*)_{\bf l}+1}{3} \right\}=1,\ \ \ 
$$
and 
\begin{equation}
t^a:=\prod_{\alpha\in I}t_\alpha^{a_\alpha}, \ \ \ \ \  
a!:=\prod_{\alpha\in I}a_\alpha!,  \ \   \ \  a^* := \sum_{\alpha}a_\alpha\cdot \alpha.
\end{equation}
 and $\beta_{ij}\in\N_0^4$ is the exponent vector of $x_{i}x_j$, that is $x^{\beta_{ij}}=x_ix_j$.
\end{theo}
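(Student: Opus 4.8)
The plan is to feed the explicit period series of \cite{ho13} into the cycle--reconstruction mechanism of \cite{EmreHossein2018}, whose engine is the Artinian Gorenstein ring of socle degree two attached to a line. Recall the algebraic side of this engine: a line $\ell\subset X_t$ is cut out by a two--dimensional space $W_\ell\subset\langle x_0,\dots,x_3\rangle$ of linear forms; picking a basis $\ell_1,\ell_2$ of $W_\ell$ one writes $F_t=\ell_1g_1+\ell_2g_2$ with $g_1,g_2$ quadratic, and since $X_t$ is smooth along $\ell$ the forms $\ell_1,\ell_2,g_1,g_2$ form a regular sequence (a common zero would be a singular point of $X_t$). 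Hence $A_\ell:=\C[x_0,\dots,x_3]/(\ell_1,\ell_2,g_1,g_2)$ is a graded Artinian complete intersection, in particular Gorenstein, with Hilbert function $(1,2,1)$ --- socle degree two --- and $A_1\cong\langle x\rangle/W_\ell$. The socle pairing $A_1\times A_1\to A_2\cong\C$ is nondegenerate, so its pullback along $\langle x\rangle\twoheadrightarrow A_1$ is a symmetric bilinear form $B_\ell$ on $\langle x\rangle\cong\C^4$ with radical exactly $W_\ell$; thus the Gram matrix of $B_\ell$ in the basis $x_0,\dots,x_3$ has rank $2$ and its row space is $W_\ell^{\perp}$, the affine cone over $\ell$.

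The first step is to read \eqref{15.12.16-2022} as periods: by \cite{ho13} these series converge for $t$ near $0$ and equal $\int_{\delta_{k,t}}\mathrm{res}\!\left(x_ix_j\,\Omega/F_t^2\right)$, where $\delta_{k,t}$ is the flat continuation of $\delta_{k,0}=[\ell_{k,0}]$, the class of the Fermat line attached to $k$; since residues of second--order pole forms are primitive, this equals the intersection number of $\mathrm{res}(x_ix_j\Omega/F_t^2)$ with $[\ell_{k,t}]$, where $\ell_{k,t}$ is the unique line of $X_t$ deforming $\ell_{k,0}$ (unique because a line on a smooth cubic surface has normal bundle $\O(-1)$, hence is rigid and unobstructed in the family \eqref{clarakaren2019}). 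The second step is to invoke the description of the Artinian Gorenstein ring of a linear Hodge cycle on a hypersurface (\cite{Roberto,ho13-Roberto}; cf.\ \cite{voisin89,Otwinowska2003,Dan-2014}): the period functional $\overline Q\mapsto\langle\mathrm{res}(Q\Omega/F_t^2),[\ell_{k,t}]\rangle$ on the degree--two Jacobian ring $R_2$ is governed by $A_{\ell_{k,t}}$, so that $p_{ij}(t)=c(t)\,\langle\bar x_i,\bar x_j\rangle_{A_{\ell_{k,t}}}$ for a nonzero scalar $c(t)$ depending holomorphically on $t$. Therefore the symmetric matrix $P(t):=(p_{ij}(t))_{0\le i,j\le3}$ equals $c(t)$ times the Gram matrix of $B_{\ell_{k,t}}$: it has rank exactly $2$, kernel $W_{\ell_{k,t}}$, and row space the affine cone over $\ell_{k,t}$.

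It remains to check that \eqref{CarolinePilar2022} writes down this line and that everything specialises correctly at $t=0$. Expanding the $2\times2$ minors, the first equation of \eqref{CarolinePilar2022} is $\det\begin{bmatrix}x_0&x_1&x_2\\ p_{00}&p_{01}&p_{02}\\ p_{20}&p_{21}&p_{22}\end{bmatrix}=0$ and the second is $\det\begin{bmatrix}x_0&x_2&x_3\\ p_{00}&p_{02}&p_{03}\\ p_{20}&p_{22}&p_{23}\end{bmatrix}=0$; when the minor $c_{0202}=p_{00}p_{22}-p_{02}^2$ is nonzero (so rows $0$ and $2$ of $P(t)$ are independent and span its row space) these two equations say precisely that $(x_0,\dots,x_3)$ lies in the span of rows $0$ and $2$ of $P(t)$ --- solve the first for $(x_0,x_1,x_2)$, the second for $(x_0,x_2,x_3)$, and match on the invertible $\{0,2\}$--block --- i.e.\ on the cone over $\ell_{k,t}$; hence $\P^1_{k,t}=\ell_{k,t}$. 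At $t=0$ only the term $a=0$ in \eqref{15.12.16-2022} survives, and the congruences \eqref{TheLastMistake2017} hold exactly for the four vectors $\beta_{ij}$ with $\{i,j\}$ meeting both blocks of the partition attached to $k$; using $\langle\tfrac{1}{3}\rangle=\langle\tfrac{2}{3}\rangle=1$ one gets $P(0)$ a rank--two, block--antidiagonal matrix whose row space is the cone over the Fermat line $\{x_0=\zeta_1x_{\bf m},\ x_{\bf n}=\zeta_2x_{\bf l}\}$ (which lies on $X_0$ since $\zeta_1^3=\zeta_2^3=-1$), and $c_{0202}(0)\ne0$ whenever $0$ and $2$ lie in different blocks. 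This identifies which of the $27$ lines each $k$ produces; since they are distinct and $\ell_{k,t}$ is the unique deformation of $\ell_{k,0}$, the theorem follows. (For the lone partition with $0$ and $2$ in the same block, $c_{0202}$ vanishes at the Fermat point but not for general $t$, so there \eqref{CarolinePilar2022} computes the line only off a proper analytic subset; alternatively one replaces rows $0,2$ by any pair spanning the row space of $P(t)$.)

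The main obstacle is the middle step: showing that the combinatorial series \eqref{15.12.16-2022} really is the period of $[\ell_{k,t}]$ and, above all, that the period pairing on $R_2$ is encoded by the socle--two Gorenstein ring $A_{\ell_{k,t}}$ with a controlled nonvanishing constant --- this is what forces $\mathrm{rank}\,P(t)=2$ and converts the periods into linear equations for the line. Everything downstream is linear algebra with $2\times2$ and $3\times3$ determinants plus a finite evaluation at the Fermat surface.
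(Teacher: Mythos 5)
Your proposal is correct and follows essentially the same route as the paper: the socle--two Artinian Gorenstein ring $\C[x]/(\ell_1,\ell_2,g_1,g_2)$ forces the period matrix $(p_{ij})$ to be symmetric of rank two with kernel the span of the line's two linear equations (equivalently, row space the cone over the line), the $3\times 3$ minors built from rows $0,2$ then yield \eqref{CarolinePilar2022}, and the Taylor series of \cite{ho13} plus the evaluation at the Fermat point identify which of the $27$ lines each $k$ produces. Your closing caveat is in fact a substantive point that the paper's own proof glosses over: for $({\bf m},{\bf n},{\bf l})=(2,1,3)$ (the partition putting $0$ and $2$ in the same block, hence nine of the twenty--seven $k$'s) the only nonzero entries of $A(0)$ in rows $0,2$ are $p_{01}(0)=\zeta_1^2\zeta_2^2$, $p_{03}(0)=\zeta_1^2\zeta_2$, $p_{21}(0)=\zeta_1\zeta_2^2$, $p_{23}(0)=\zeta_1\zeta_2$, so row $0$ equals $\zeta_1$ times row $2$, every minor $c_{02j_1j_2}$ vanishes at $t=0$, and \eqref{CarolinePilar2022} degenerates to $0=0$ at the Fermat point --- the matrix $A(0)$ displayed in the paper is only the ${\bf m}=1$ case, so the assertion that rows $0$ and $2$ are independent for all $t\in(\T,0)$ requires ${\bf m}\neq 2$, and for ${\bf m}=2$ one must either replace rows $0,2$ by rows $0,1$ (which are independent at $t=0$ in that case) or restrict to the locus where some $c_{02j_1j_2}(t)\neq 0$, exactly as you indicate.
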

We can interpret this theorem as finding the rational curves of  $X_t$ as variety over the field of formal power series in $t$ with coefficients in $\Q(\zeta_6)$.  The power series $p_{ij}$'s  are Taylor series of periods of $X_t$, and so near $0$ they are convergent. As they are algebraic functions, one might compute their values over rational numbers by high precision in order to describe the corresponding algebraic numbers.

In 27 July 2022 I visited Duco van Straten for few hours in Mainz. Despite his eye surgery, his excitement with my Hodge theory book \cite{ho13} and my joint paper \cite{EmreHossein2018} had so powerful effect in my soul 
that I decided to elaborate further some of the ideas developed in these texts which resulted in the present text. My sincere thanks go to him for many encouraging discussions.

\section{Proof of Theorem \ref{main}}
\label{aima2017}
Let $X\subset \P^3$ be a smooth cubic surface given by the homogeneous polynomial $F$ of degree $3$.  
For a homogeneous polynomial $P$ of degree $2$ in $x_0,x_1,x_2,x_3$  define 
$$
\omega_{P}:={\rm Resi}\left(\frac{
P\cdot 
\sum_{i=0}^3 (-1)^ix_i\widehat{dx_i}
 }{f^{2}_t}\right)\in H^2_\dR(X),
$$
where ${\rm Resi}: H^3(\P^3\backslash X)\to H^2_\dR(X)$ is the Griffiths residue map, see for instance \cite[Chapter 7]{ho13-Roberto}.
The following theorem tells us how to recover the equations of $\P^1$ using its periods. 
\begin{prop}
Let $\P^1$ be a line inside $X$. 
The $4\times 4$ matrix
\begin{equation}
\label{18102022}
A=\begin{bmatrix}
   \mathlarger{\int}_{\P^1}\omega_{x_ix_j}
  \end{bmatrix}_{0\leq i,j\leq 3 }
\end{equation}
is of rank two and its kernel is generated by $a_i:=(a_{1,i},a_{2,i},a_{3,i},a_{4,i}),\ i=1,2$, where $a_{1,i}x_0+a_{2,i}x_1+a_{3,i}x_2+a_{4,i}x_3=0,\ i=1,2$ are two linear equations of $\P^1$. 
\end{prop}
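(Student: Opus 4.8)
The key is to understand what the period $\int_{\P^1}\omega_{x_ix_j}$ computes, and then exploit the fact that $\P^1$ lives on a plane section. First I would recall (from Griffiths residue theory, as in \cite{ho13-Roberto}) that for a homogeneous quadratic $P$, integrating $\omega_P$ over an algebraic cycle $Z$ factors through the class of $Z$ in $H^2_\dR(X)$ and depends on $P$ only through its image in the Jacobian ring $R_F := \C[x_0,\dots,x_3]/\jacob(F)$ in the relevant graded piece. So the assignment $P \mapsto \int_{\P^1}\omega_P$ is a linear functional on the degree-$2$ part of the Jacobian ring, hence the matrix $A=[\int_{\P^1}\omega_{x_ix_j}]$ is, up to the change of basis sending the symmetric $4\times4$ matrix $(x_ix_j)$ to the monomial basis of degree-$2$ forms, the Gram matrix of this functional against the quadratic monomials. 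The rank statement and the kernel statement are then statements about this functional.

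**Main computation.** The crucial point is that if $\P^1 = \{L_1 = L_2 = 0\}$ for two independent linear forms $L_1, L_2$, then the ideal of $\P^1$ in degree $2$ is spanned by $L_1\cdot x_0,\dots,L_1\cdot x_3, L_2\cdot x_0,\dots,L_2\cdot x_3$; modulo $\jacob(F)$ this is a subspace on which the period functional vanishes. Indeed, a quadratic form $P$ vanishing on the plane containing $\P^1$ — equivalently $P \in (L_1,L_2)$ — gives $\omega_P$ restricting to a form that is, on a neighborhood of $\P^1$, in the image of forms with a pole reduced by the vanishing of $P$ along $\P^1$; more precisely one uses the adjunction/residue description to see $\int_{\P^1}\omega_P = 0$ whenever $P|_{\P^1}\equiv 0$. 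This forces the two vectors $a_1 = (a_{1,1},a_{2,1},a_{3,1},a_{4,1})$ and $a_2$ coming from the coefficients of $L_1, L_2$ to lie in $\ker A$: writing $A = [\int \omega_{x_ix_j}]$, the product $A\cdot a_i$ has $k$-th entry $\sum_j \int_{\P^1}\omega_{x_kx_j} a_{j+1,i} = \int_{\P^1}\omega_{x_k L_i} = 0$ since $x_k L_i$ vanishes on $\P^1$. So $\ker A \supseteq \langle a_1,a_2\rangle$, which is two-dimensional, i.e. $\rank A \leq 2$.

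**The reverse inequality.** The remaining and harder half is $\rank A \geq 2$, i.e. that $\ker A$ is exactly $\langle a_1,a_2\rangle$ and not larger. For this I would argue that a vector $b = (b_0,b_1,b_2,b_3)$ in $\ker A$ gives a linear form $L_b = \sum b_i x_i$ with $\int_{\P^1}\omega_{x_k L_b} = 0$ for all $k$, hence $\int_{\P^1}\omega_{M L_b} = 0$ for every linear $M$; that is, the functional kills the whole subspace $L_b\cdot(\text{linears})$ of degree-$2$ forms. If $L_b \notin \langle L_1,L_2\rangle$, then $L_b$ together with $L_1, L_2$ span a $3$-dimensional space of linears, and $L_b\cdot(\text{linears})$ together with $L_1\cdot(\text{linears}) + L_2\cdot(\text{linears})$ would exhaust a large part of degree-$2$ forms — one then needs that the period functional is \emph{not} identically zero on all of the relevant graded piece, which follows because the class of $\P^1$ in $H^2_\dR(X)$ is nonzero (its self-intersection is $-1$) and the residue pairing is nondegenerate on the primitive cohomology plus the hyperplane class. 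Tracking the precise dimension count, one checks $\dim(L_b\cdot V_1 + L_1\cdot V_1 + L_2\cdot V_1) \geq 8 > \dim(\text{image of }R_F \text{ in } H^2)$, forcing a contradiction. I expect this dimension-counting step — making sure nondegeneracy of the residue pairing kicks in precisely where needed, and that no Jacobian-ideal relations conspire to shrink the relevant subspace below the threshold — to be the main obstacle; everything else is formal manipulation of the residue map.
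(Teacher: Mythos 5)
Your first half is sound and matches the paper's starting point: the containment $\langle a_1,a_2\rangle\subseteq\ker A$ is exactly the degree-one part of the inclusion $\langle f_1,f_2,g_1,g_2\rangle\subset I(\delta)$ (with $F=f_1g_1+f_2g_2$) that the paper imports from Dan and from \cite[Chapter 11]{ho13-Roberto}. Be aware, though, that ``$P|_{\P^1}\equiv 0$ implies $\int_{\P^1}\omega_P=0$'' is not a formal consequence of restricting a residue representative (a de Rham class cannot simply be restricted to the cycle); it is precisely the nontrivial input those references supply.

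The genuine gap is in your reverse inequality. Your count ``$\dim(L_b V_1+L_1V_1+L_2V_1)\geq 8>\dim (R_F)_2=6$, forcing a contradiction'' compares dimensions in two different spaces and proves nothing: the functional $P\mapsto\int_{\P^1}\omega_P$ lives on the $10$-dimensional $\C[x]_2$, so if nonzero its kernel is $9$-dimensional and happily contains an $8$- or $9$-dimensional subspace. To get a contradiction you must show that the \emph{image} of $\langle L_1,L_2,L_b\rangle_2$ in $(R_F)_2=\C[x]_2/\jacob(F)_2$ is all of $(R_F)_2$, not merely large. This can be done, but it requires an ingredient you never invoke: if $L_b\notin\langle L_1,L_2\rangle$ then $\langle L_1,L_2,L_b\rangle_2$ is exactly the $9$-dimensional hyperplane of quadrics vanishing at the point $p=\{L_1=L_2=L_b=0\}$, and by \emph{smoothness} of $X$ some $\partial F/\partial x_i$ does not vanish at $p$, so $\langle L_1,L_2,L_b\rangle_2+\jacob(F)_2=\C[x]_2$; only then does nonvanishing of the primitive class of the line (your self-intersection remark) give the contradiction. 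The paper closes this step differently and more structurally: $\langle f_1,f_2,g_1,g_2\rangle$ is an Artinian Gorenstein (complete intersection) ideal with one-dimensional socle in degree $2$, so any strictly larger ideal must contain that socle; since $I(\delta)_2\neq\C[x]_2$, the two ideals coincide and hence $I(\delta)_1=\C f_1+\C f_2$ is exactly two-dimensional. As written, your proposal identifies the obstacle but does not overcome it.
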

\begin{proof}
The homology $H_2(X,\Z)$ is of rank $7$ and all cycles $\delta\in H_2(X,\Z)$ are Hodge cycles. Let $[Z_\infty]$ be the homology class of the hyperplane section. 
For every Hodge cycle $\delta\in H_2(X,\Z)/\Z [\plc]$ we define its associated Artinian Gorenstein ideal $I(\delta)\subset\C[x]$
and the the corresponding Artinian  Gorenstein algebra $R(\delta):=\C[x]/ I(\delta)$ which is of socle $2$. We have $I(\delta)_0=0, I(\delta)_a=\C[x]_a,\ a\geq 3$ and for $a=1,2$: 
$$
I(\delta)_a:=\left \{Q\in \C[x]_a \Bigg| \mathlarger{\int}_{\delta} \omega_{PQ} =0,\ \ 
\forall P\in \C[x]_{2-a} \right\}. 
$$
If $\delta=[\P^1],\ \ \P^1:=\{ f_1=f_2=0\}\subset X$ and $f_1,f_2$ are homogeneous degree one polynomials then we have $F=f_1g_1+f_2g_2$ for some degree two homogeneous polynomials $g_1,g_2$ and it turns out that 
$$
\langle f_1,f_2, g_1,g_2\rangle=I(\delta) 
$$
as we have the inclusion $\subset$ and both ideals are of the same socle $2$. This idea comes originally from \cite{Dan-2014} and has been further elaborated in  \cite[Chapter 11]{ho13-Roberto}. 
Therefore, $I(\delta)_1=\C f_1+\C f_2$, that is, we can recover the ideal of $\P^1$ from its Artinian Gorenstein ideal $I(\delta)_1$. It follows that the matrix $A$
 is of rank two  and the two linearly independent  equations of $\P^1\subset \P^3$ are give by 
 $a_ix=0$, where $a_i, i=1,2$ are two linearly independent vectors in the kernel of $A$. 
 \end{proof}
 
\begin{proof}[Proof of Theorem \ref{main}]
 We consider the family \eqref{CarolinePilar2022} and the the matrix $A=A(t)$ in \eqref{18102022} has entries which are holomorphic functions in $t\in(\T,0)$. The matrix $A(t)$ evaluated at the Fermat point $t=0$ is
 $$
 A(0)=\frac{2\pi\sqrt{-1}\zeta_1\zeta_2}{9}
 \begin{bmatrix}
  0& 0& \zeta_1\zeta_2 & \zeta_1 \\
  0& 0&        \zeta_2 &  1 \\
\zeta_1\zeta_2 & \zeta_2 & 0 & 0 \\
\zeta_1 & 1&     0 &  0 \\
 \end{bmatrix}.
 $$
 This has been calculated on   \cite[Theorem 1]{ho13-Roberto} and it implies that the first and third row of $A(t)$ are linear independent for $t\in(\T,0)$. We have to find two vectors perpendicular to these two vectors. For this we use  the fact that the determinant of the $3\times 3$ minors of $A(t)$ formed by rows $0,2,1$ and columns $0,1,2$ (resp.  rows $0,2,3$ and columns $0,2,3$) are zero. We get two vectors corresponding to coefficients of \eqref{CarolinePilar2022}. Note that $c_{0202}(0)\not=0$.  

 For the Fermat variety $X_0$, the twenty seven lines are given by 
\begin{equation}
\label{CarolinePilar}
 \P^1_{k,0}:  
\left\{
 \begin{array}{l}
 x_{0}-\zeta_1x_{\bf m}=0,\\
 x_{\bf n}-\zeta_2 x_{\bf l}=0,
 \end{array}
 \right.\ \ \ \ \ \zeta_1^3=\zeta_2^3=-1, \{0,{\bf m},{\bf n},{\bf l}\}=\{0,1,2,3\},\ {\bf n}<{\bf l}. 
 \end{equation}
For $t\in\T$ near to the Fermat point $0$, there is a unique rational curve $\P^1_{k, t}$ which is obtained by deformation of $\P^1_{k,0}$. We want to compute its equations.  Its homology class $\delta_t=[\P^1_{k,t}]\in H_2(X_t,\Z)$ is the monodromy (parallel transport) of the homology class of $\P^1_{k,0}$.  The Taylor series of integration of $\omega_{x_ix_j}$ over $\delta_t$ at $t=0$ is computed in \cite[\S 18.3]{ho13} and we have 
 \begin{eqnarray}
 \label{15.12.16}
 & & 
 \frac{ -6}{ 2\pi \sqrt{-1}}
 \mathlarger{\mathlarger{\int}}_{\delta_t}\omega_{x_ix_j}
 =p_{ij},\ \ i,j=0,1,2,3,
\nonumber
\end{eqnarray}
where $p_{ij}$ is the power series \eqref{15.12.16-2022}. This together with Proposition \ref{18102022} finishes the proof. 
 \end{proof}
 \begin{rem}\rm 
For the family \eqref{CarolinePilar2022} define 
$$
P_{ij}(y):=\prod_{k=1}^{27}(y- \int_{\P^1_{k,t}}\omega_{ij})=y^{27}+\sum_{i=1}^{27}\tilde p_{i}(t)y^{27-i},
$$
where $\P^1_{k,t}, \ k=1,2,\ldots,27$ are lines of $X_t$. The coefficients $\tilde p_i$ of this polynomial in $y$ are rational functions in $t$ with coefficients in $\Q$ and poles along the discriminant. This follows from the finite growth of integrals near degeneration points $\Delta=0$, see \cite[Chapter 13]{arn}, and simple Galois action argument.   The computations of these coefficients seems to be out of reach even for the following family written in the affine chart $x_0=1$: 
\begin{equation}
\label{clarakaren2019-lubo}
 X_t:\ \ f_t:=x_1^3+x_2^3+x_3^3-\sum_{i\in I}t_ix^i=0,\ \ t:=(t_i,\ i\in I)\in\T:=\C^{10}\backslash\{\Delta=0\}, 
\end{equation}
where $x^i$'s are monomials of degree $\leq 2$ in $x_1,x_2,x_3$. This is a tame polynomial in the sense of \cite[Chapter 10]{ho13}. 
We consider the $\C^*$-action 
$$
\C^3\times\C^*\to\C^3, \ ((x_1,x_2,x_3),a) \to (a^{-1}x_1,a^{-1}x_2,a^{-1}x_3)
$$ 
which induces an action in $\T$:
$$
(t_\alpha,\alpha\in I)\bullet a=(t_\alpha a^{3-\deg(x^\alpha)},\alpha\in I)
$$
and an isomorphism $h: X_{t\bullet a}\to X_{t}$ such that for a monomial $x^\beta$ with $\deg(x^\beta) \leq 2$,  we have 
$h^*\frac{x^\beta dx}{f_t^2}=a^{3-\deg(x^\beta)}\frac{x^\beta dx}{f_t^2}$. We conclude that
\begin{equation}
\int_{\delta_{t\bullet a}} \frac{x^\beta dx}{f_t^2}=a^{\deg(x^\beta)-3}\int_{\delta_{t}} \frac{x^\beta dx}{f_t^2} 
\end{equation}
which implies that this integral is homogeneous of negative degree $\deg(x^\beta)-3$ in variables $t_\alpha, \deg(t_\alpha)=3-\deg(x^\alpha)$. The discriminant $\Delta$ is a homogeneous polynomial of degree $(d-1)^{n+1}d=2^33=24$, see \cite[Section 10.9]{ho13}. The pole order of $\tilde p_i$ along $\Delta$ is at most $2i$, see \cite[Equality 10.25 and  Proposition 11.2]{ho13}.
If this is the case then $\tilde p_i=\frac{p_i}{\Delta^{2i}}$, $\tilde p_i$ a homogeneous polynomial, and 
$\deg(p_i)-2i\deg(\Delta)=i(\deg(x^\beta)-3)$. Therefore,  $p_i$ is of  degree $(45+\deg(x^\beta))i$.
In theory, one can use the formal power series $p_{ij}$ in \eqref{15.12.16-2022} and compute the polynomials $p_i$, however, in practice this is out of the capacity of the author's simple computer code written in Singular. This can be found in the tex file of the present text in arxiv.  
\end{rem}

\def\cprime{$'$} \def\cprime{$'$} \def\cprime{$'$} \def\cprime{$'$}

\end{document}